\documentclass[12pt,letterpaper]{amsart}

\usepackage[none]{hyphenat}
\usepackage[utf8]{inputenc}
\usepackage[T1]{fontenc}
\usepackage{amsfonts}
\usepackage[leqno]{amsmath}
\usepackage{amssymb, comment, float}
\usepackage[dvipsnames]{xcolor}
\usepackage[foot]{amsaddr}
\usepackage[shortlabels]{enumitem}
\usepackage{mathdots}
\usepackage{todonotes}
\usepackage[footskip=0.5in, headheight = 0.5in, top=1.25in, bottom=1.25in, right=1in, left=1in]{geometry}
\usepackage{hyperref}
\usepackage[center]{caption}
\usepackage{eufrak}
\usepackage{marvosym}
\usepackage{latexsym}
\usepackage{tikz}
\usepackage{subfig}
\usepackage{thm-restate}

\hypersetup{
colorlinks,
linkcolor=blue,
urlcolor=black,
citecolor=blue,}

\newtheorem{theorem}{Theorem}[section]
\newtheorem{lemma}[theorem]{Lemma}

\DeclareMathOperator{\tw}{tw}

\DeclareMathOperator{\ta}{{\text{\sf{tree}}-\alpha}}

\newcommand{\mca}{\mathcal}
\newcommand{\poi}{\mathbb N}

\newcommand{\blackref}[1]{\hypersetup{linkcolor=black}\ref{#1}\hypersetup{linkcolor=blue}}

\newcounter{tbox}

\newcommand{\mylongtitle}[1]{%
	\ifodd\value{page}%
		\protect\parbox{0.97\linewidth}{#1}\hfill%
	\else%
		\hfill\protect\parbox{0.97\linewidth}{#1}%
	\fi%
}

\title[Tree-independence number VIII.]{Tree-independence number VIII. 3PC-free graphs have logarithmic tree-independence number}

\author{Maria Chudnovsky$^\dagger$}
\author{Julien Codsi$^\ddagger$}
\author{Ajaykrishnan E S$^\star$}
\author{Daniel Lokshtanov$^\star$}
\author{Ilya Maier$^\S$}

\thanks{$^\dagger$ Princeton University, Princeton, NJ, USA.
	Supported by NSF Grant DMS-2348219, NSF Grant CCF-2505100, AFOSR grant FA9550-25-1-0275 and a Guggenheim Fellowship}
\thanks{$^\ddagger$ Princeton University, Princeton, NJ, USA. Supported by NSF Grant DMS-2348219, NSF Grant CCF-2505100, AFOSR grant FA9550-25-1-0275}
\thanks{$^\star$ Department of Computer Science, University of California Santa Barbara, Santa Barbara, CA, USA. Supported by NSF Grant CCF-2505099}
\thanks{$^\S$ Princeton University, Princeton, NJ, USA}

\date{\today}
\begin{document}
\raggedbottom
\maketitle

\begin{abstract}
	
	A {\it three-path-configuration} is a graph consisting of three pairwise internally vertex-disjoint paths, the union of every two of which is an induced cycle of length at least four.
	A graph is {\it 3PC-free} if no induced subgraph of it is a three-path-configuration.
	We show that 3PC-free graphs have logarithmic tree-independence number, which is asymptotically best possible.
	
\end{abstract}

\section{Introduction}

All graphs in this paper are finite and simple and all logarithms are base $2$.
We treat a graph and its vertex set interchangeably.
Throughout this paper we use $n$ to denote the number of vertices of a graph $G$.
For graphs $G$ and $H$, we say that $G$ is {\em $H$-free} if no induced subgraph of $G$ is isomorphic to $H$.
For $X,Y\subseteq V(G)$, we say that $X$ and $Y$ are {\em anticomplete} if $X\cap Y=\emptyset$ and there is no edge in $G$ with an end in $X$ and an end in $Y$.
The {\em closed neighborhood} of a set $X$, denoted by $N[X]$, is the set of vertices of $G$ that are in $X$ or have at least one neighbor in $X$.
For standard graph-theoretic terminology, the reader is referred to \cite{Diestel}.

For a graph $G$, a {\em tree decomposition} $(T, \chi)$ of $G$ consists of a tree $T$ and a map $\chi: V(T) \rightarrow 2^{V(G)}$ with the following properties:
\begin{enumerate}[\rm (i)]
	\item For every $v \in V(G)$, there exists $t \in V(T)$ such that $v \in \chi(t)$.
	\item For every $uv\in E(G)$, there exists $t \in V(T)$ such that $u,v \in \chi(t)$.
	\item For every $v \in V(G)$, the subgraph of $T$ induced by $\{t \in V(T) \mid v \in \chi(t)\}$ is connected.
\end{enumerate}
The {\em width} of a tree decomposition $(T, \chi)$ is $\max_{t \in V(T)}|\chi(t)|-1$.
The {\em independence number} of a tree decomposition $(T, \chi)$ is $\max_{t \in V(T)} \alpha(\chi(t))$.
The {\em treewidth} of $G$, denoted by $\tw(G)$, is the minimum width of a tree decomposition of $G$.
The {\em tree-independence number} of $G$, denoted by $\ta(G)$, is the minimum independence number of a tree decomposition of $G$.

A {\em path} in a graph is an induced subgraph that is a path.
The {\em interior} of a path $P$ is denoted by $P^*$.
A {\em theta} is a graph consisting of two non-adjacent vertices $a,b$ and three paths $P_1, P_2, P_3$ from $a$ to $b$, such that $P_i^*$ is anticomplete to $P_j^*$ for $i\neq j$.
A {\it three-path-configuration} is a graph consisting of three pairwise internally vertex-disjoint paths, the union of every two of which is an induced cycle of length at least four.
A graph is {\it 3PC-free} if no induced subgraph of it is a three-path-configuration.

It is known that 3PC-free graphs have polylogarithmic tree-independence number and if their clique number is bounded by a constant $t$, then they also have logarithmic treewidth.

\begin{theorem}[Abrishami, Chudnovsky, Hajebi, Spirkl \cite{tw3}]
\label{thm:3pc-tw}

	For every $t\in \poi$ there exists a constant $c_t$ such that if a graph $G$ is 3PC-free and $K_t$-free, then $\tw(G)\leq c_t \log n$.

\end{theorem}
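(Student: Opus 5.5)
The plan is to follow the now-standard route to logarithmic treewidth via balanced separators. First, by the well-known relation between treewidth and balanced separators, it suffices to show the following: for every normal weight function $w$ on $V(G)$, there is a $w$-balanced separator of size $O_t(\log n)$. Here a $w$-balanced separator is a set $X$ such that every component of $G\setminus X$ has weight at most $1/2$. I would prove this by contradiction. Assume $G$ is 3PC-free, $K_t$-free and $(w,c)$-unbalanced, meaning it has no such separator of size $c\log n$. The goal is then to build a large induced structure that forces a three-path-configuration.

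The first key step is a \emph{central bag} argument. Since $G$ is $K_t$-free and 3PC-free, it excludes thetas, pyramids and prisms. I would then use the decomposition theorem for such graphs. In particular, wheels are restricted, and every vertex $v$ has a ``star cutset'' type separation $(A_v,C_v,B_v)$ with $C_v\subseteq N[v]$. Choose the side so that the big component lies in $B_v$. Using only $O(\log n)$ such separations chosen in a laminar or ``loosely non-crossing'' manner, I would form a central bag $\beta$ in which no further balanced star separators exist. The weight of the removed parts is pushed into $\beta$, and $\beta$ must still be unbalanced with a slightly smaller constant.

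The second step is to exploit the fact that $\beta$ is wheel-free, or more precisely free of the relevant wheels. Inside $\beta$, every vertex neighborhood breaks only into bounded pieces, so a degeneracy or bounded-degree-type property holds. Combined with $K_t$-freeness and Ramsey, this yields that $\beta$ has bounded degree after a suitable reduction. For bounded-degree graphs excluding thetas and prisms, one can then use the Korhonen-style result: bounded-degree graphs of large treewidth contain a large grid or line-graph-of-wall minor. One could also use directly a Menger-type argument: $\Omega(\log n)$ pairwise disjoint paths between two far-apart heavy sets. Routing these paths through the bounded-degree structure produces three paths forming a theta, prism, or pyramid, which is a contradiction. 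Finally, a separator for $\beta$ of size $O(\log n)$ lifts back to $G$ by adding the $O(\log n)$ vertex-centers used to form $\beta$. Each of those contributes its neighborhood, which is handled by bounded degree in the relevant part.

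The main obstacle is making the central bag reduction cost only $O(\log n)$ while keeping control of degrees. Naively, each star cutset adds a whole neighborhood $N[v]$ to the separator. I would try first to bound the neighborhood contribution using $K_t$-freeness: a neighborhood in a 3PC-free graph meets each relevant piece in a clique-like structure. I would then choose the separations via a halving argument, where each chosen vertex cuts the remaining heavy weight by a constant fraction, which gives logarithmically many rounds.
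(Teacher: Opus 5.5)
Your proposal has a genuine gap. The steps that carry all the difficulty are asserted rather than proved, and some of them are false.

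\textbf{The star separations.} It is not true that every vertex $v$ admits a star separation $(A_v,C_v,B_v)$ with $C_v\subseteq N[v]$: in a hole, no $N[v]$ is a cutset. What is known in 3PC-free graphs is that centers of wheels have star cutsets, not arbitrary vertices.

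\textbf{Bounded degree in $\beta$.} No argument is given that neighborhoods in $\beta$ ``break into bounded pieces''. Ramsey plus $K_t$-freeness only converts a large neighborhood into a large stable set of neighbors, and 3PC-freeness does not forbid that. So the degree bound does not follow.

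\textbf{The lifting step.} This is where the argument decisively fails as written. The sets you add in $G$ are $C_v\subseteq N_G[v]$ for the chosen centers, and their sizes are governed by degrees in $G$, not in $\beta$. Your proposed remedy, that a neighborhood meets each relevant piece in a clique-like set, is false. Let $v$ be adjacent to pairwise nonadjacent $x_1,\dots,x_m$, and let $D=h_1\cdots h_{4m-7}$ be a path anticomplete to $v$. Add the edges $x_1h_1$, $x_2h_{4m-7}$, and $x_jh_{4j-10}$, $x_jh_{4j-8}$ for $3\le j\le m$. This graph is triangle-free. It is also theta-free, because $G\setminus v$ is a chain of disjoint $4$-cycles, and each candidate pair of ends fails. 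Hence it is 3PC-free. Yet $D$ is a component of $G\setminus N[v]$ whose attachment set $N(D)=N(v)$ is a stable set of size $m$. This persists if you identify $v$ with the center of a wheel with three pairwise nonadjacent spokes, which makes $v$ a wheel center. So $K_t$-freeness does not bound these attachments, and nothing else in your sketch bounds the total cost of the lifted separations. The Menger ``routing'' step that is supposed to produce a 3PC is likewise only asserted.

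\textbf{How the paper does it.} The paper avoids this obstacle by never measuring separator \emph{size} until the very end: the statement is a corollary of Theorem~\ref{thm:main}. To prove $\ta(G)\le c+d\log n$, the paper combines three ingredients in Lemma~\ref{lem:ta-bounded}:
\begin{enumerate}[(i)]
\item the dominated separators $N[Y]$ with $|Y|\le b$ from Theorem~\ref{thm:dbs}, whose size is unbounded;
\item loose separability, obtained from Lemma~\ref{lem:hit-vs-anti};
\item the $\alpha$-bounded path hitting sets of Theorem~\ref{thm:sep}.
\end{enumerate}
Throughout, only independence number is measured, so large neighborhoods are harmless. $K_t$-freeness is used exactly once, through $\chi$-boundedness of theta-free graphs. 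Say $\chi(H)\le f(\omega(H))$ for theta-free $H$. Then every bag $B$ of a decomposition witnessing $\ta(G)$ satisfies $|B|\le \chi(G[B])\,\alpha(G[B])\le f(t-1)(c+d\log n)$. This gives $\tw(G)\le f(t-1)(c+d)\log n$ for $n\ge 2$. Ramsey alone would only give $|B|=O((\log n)^{t-1})$; $\chi$-boundedness is what makes the conversion from $\alpha$ to size linear. If you want to repair your approach, the cleanest fix is to aim first for the $\alpha$-version, where your main obstacle disappears.
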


\begin{theorem}[Chudnovsky, Hajebi, Lokshtanov, Spirkl \cite{ta2}]
\label{thm:3pc-ta}

	There exists a constant $c$ such that if a graph $G$ with $n>1$ is 3PC-free, then $\ta(G) \le c (\log n)^2$.

\end{theorem}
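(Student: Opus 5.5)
Theorem~\ref{thm:3pc-ta} is quoted from \cite{ta2}; the plan below is how I would reprove it. I would use the standard reduction from tree-independence number to balanced separators. Suppose that for every induced subgraph $H$ of $G$ and every normal weight function $w$ on $H$ there is a set $S$ with $\alpha(S)\le k$ such that every component of $H\setminus S$ has weight at most $1/2$. Then $\ta(G)=O(k)$, by the usual recursive construction used for treewidth. So it suffices to produce, for a 3PC-free graph $G$ and a weight function $w$, a $w$-balanced separator $S$ with $\alpha(S)=O((\log n)^2)$.

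\textbf{Step 1 (obstructions to a cheap separator).} Assume there is no balanced separator of the form $N[X]$ with $|X|$ small. We exploit that 3PC-free graphs contain no theta, pyramid or prism. In particular, for any vertex $v$ and any component $C$ of $G\setminus N[v]$, the set $N(C)\cap N(v)$ has a restricted structure. Concretely, if $x,y\in N(C)\cap N(v)$ are non-adjacent, then the path through $C$ and the path $x$--$v$--$y$ would form a hole. A third disjoint route would then produce a 3PC.

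Define $\beta(v)$ to be the \emph{$w$-big component} of $G\setminus N[v]$, if one exists. Because $G$ has no 3PC, I expect to show the following: if $N[v]$ is not balanced, then the separator $N(\beta(v))\cap N(v)$ splits $G$ in a controlled way. Moreover, this boundary has independence number bounded in terms of how many ``branches'' it sees.

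\textbf{Step 2 (a logarithmic iteration).} Starting from $G$, repeatedly pick a vertex $v$ for which $N[v]$ removes a constant fraction of the \emph{number of vertices} of the current big component, not of its weight. Then recurse inside $\beta(v)$, keeping only the boundary $N(v)\cap N(\beta(v))$ in the separator. After $O(\log n)$ rounds the big component disappears. The separator $S$ is then the union of $O(\log n)$ boundary sets.

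The quantitative target is that each boundary set contributes independence number $O(\log n)$. This would come from an auxiliary argument: an independent set $I$ in such a boundary, together with paths into the big component, gives a tree-like structure. In a 3PC-free graph this structure must look like a ``claw of paths'', and a counting or halving argument then bounds $|I|$ by $O(\log n)$. Summing over the rounds gives $\alpha(S)=O((\log n)^2)$.

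\textbf{Main obstacle.} The hard step is the per-round bound $\alpha(N(v)\cap N(\beta(v)))=O(\log n)$. Neighborhoods in 3PC-free graphs can contain arbitrarily large independent sets; for example, $K_{1,m}$-like structures are 3PC-free. So the bound cannot be local and must use the weight or size structure. I would first try to choose $v$ as a vertex minimizing the size of $\beta(v)$. Given an independent set $I$ in the boundary that is too large, I would then try to find either a vertex $u\in\beta(v)$ with a smaller big component, contradicting minimality, or an induced 3PC. The 3PC would be built from two vertices of $I$, the vertex $v$, and a minimal connected subgraph of $\beta(v)$ attaching to them. Making this dichotomy quantitative, so that losing a factor of two at each step yields $\log n$, is where I expect most of the work.
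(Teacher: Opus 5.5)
The paper does not prove this statement itself. It is quoted from \cite{ta2}, and it is superseded by Theorem~\ref{thm:main}, since $c+d\log n\le (c+d)(\log n)^2$ for $n\ge 2$. The paper derives Theorem~\ref{thm:main} from two results of \cite{ta2}: Theorem~\ref{thm:dbs} (balanced separators $N[Y]$ with $|Y|\le b$) and Theorem~\ref{thm:sep} (non-adjacent vertices are separated by a set of independence number at most $32\log n$). These are combined through Lemmas~\ref{lem:hit-vs-anti} and~\ref{lem:ta-bounded}.

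Your opening reduction to balanced separators of small independence number is standard and fine. After that there is a genuine gap. The per-round bound $\alpha(N(\beta(v)))=O(\log n)$ carries the whole content of the theorem (note $N(v)\cap N(\beta(v))=N(\beta(v))$), and the proposal gives no argument for it, only a hoped-for dichotomy. Step~2 also only assumes that some vertex's neighbourhood removes a constant fraction of the vertices of the current big component. Given Theorem~\ref{thm:dbs}, at most $b$ rounds would suffice anyway, so the round count is not where the difficulty lies.

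The dichotomy cannot work as stated, because its 3PC branch is purely local: it is built from $v$, a few vertices of $I$, and a minimal connected subgraph of $\beta(v)$. Two attachments give only a hole. Three or more attachments may give a wheel, and wheels are 3PC-free. Here is a concrete example:
\begin{itemize}
\item Let $v$ be complete to a stable set $\{x_1,\dots,x_m\}$.
\item Let $P=p_1\dots p_{4m}$ be a path anticomplete to $v$.
\item Let $x_i$ be adjacent exactly to $p_{4i-2}$ and $p_{4i}$ on $P$.
\end{itemize}
This graph is triangle-free. It is also theta-free: no theta avoids $v$, since the blocks of $G\setminus v$ are edges and $4$-cycles, and a short case check rules out $v$ being an end or an interior vertex of a theta. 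Hence it is 3PC-free. With the uniform weight, $\beta(v)=P$, and $N(\beta(v))=\{x_1,\dots,x_m\}$ is a stable set of size about $n/5$.

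So a large stable set in the boundary does not by itself yield a 3PC. Everything would have to come from the other branch, namely finding $u$ whose big component is smaller by a constant factor, and the proposal offers nothing for that.

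The missing ingredient is a global separation theorem like Theorem~\ref{thm:sep}, which is where the logarithm genuinely comes from. You would also need a way to combine such pairwise separators with the bounded set $Y$. The paper never bounds $\alpha$ of a neighbourhood boundary. Instead it hits all paths between two anticomplete members of a $b$-system by the union of at most $b^2$ pairwise separators, and Lemma~\ref{lem:ta-bounded} turns this into the tree decomposition.
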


On the other hand, the best lower bound on the tree-independence number of 3PC-free graphs is $\log n$ (Sintiari and Trotignon \cite{triangle-free-layered-wheels}).
This led to a question \cite{3pc-tree-alpha-question,ta2} of whether 3PC-free graphs with superlogarithmic tree-independence number exist.

Recently, Hajebi and Spirkl \cite{2dbs} introduced a powerful new notion of loose separability.
A direct application of their method allows us to close the gap between the upper and the lower bounds.
We prove:

\begin{theorem}
\label{thm:main}
	
	There exist constants $c,d$ such that if a graph $G$ with $n>1$ is 3PC-free, then $\ta(G) \le c + d\log n$.
	
\end{theorem}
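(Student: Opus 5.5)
We follow the strategy suggested in the introduction: feed the structural input that produced Theorem~\ref{thm:3pc-ta} into the loose-separability framework of Hajebi and Spirkl \cite{2dbs}. As we understand it, their main theorem says roughly the following. Suppose that in every induced subgraph $H$ of $G$, every weight function (or every set $S$ with a balance requirement) admits a balanced separator. Suppose further that this separator is not small, but is ``loosely'' small: it is contained in the union of the closed neighborhoods of a bounded number of vertices, together with a set of bounded independence number, and this must hold in a form that survives iteration. Then $\ta(G) = O(\log n)$ with only an additive constant overhead. The point is that the $(\log n)^2$ in Theorem~\ref{thm:3pc-ta} comes from applying a polylogarithmic balanced-separator bound at each of $\log n$ levels of recursion. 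Loose separability lets each level cost only $O(1)$ independence, because the neighborhoods of the bounded set of ``hub'' vertices are charged at the moment they are separated rather than carried along.

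\textbf{Step 1.} Recall the structure theory for 3PC-free graphs (equivalently, theta-, prism- and pyramid-free graphs) used in \cite{tw3,ta2}. We use the central-bag method: for a vertex $v$ with a star cutset, or a forcer/hub of bounded degree, one builds a central bag in which balanced separators are dominated by few vertices. We extract the key lemma: for every $3$PC-free $G$ and every normalized weight function $w$ on $V(G)$, there is a set $X$ with $|X|\le C$ such that every component of $G\setminus N[X]$ has weight at most $1/2$. This is ``domination-balanced separation'', which we expect is essentially established in \cite{ta2} (or earlier in the series) for 3PC-free graphs.

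\textbf{Step 2.} Verify that this property meets the hypothesis of the Hajebi--Spirkl theorem, i.e.\ that it is hereditary and holds for every weight function on every induced subgraph. Since 3PC-freeness is hereditary, this holds automatically.

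\textbf{Step 3.} Invoke their theorem to conclude $\ta(G)\le c+d\log n$.

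The main obstacle is matching precisely the hypothesis of loose separability. If it requires more than a bounded dominating set, for instance control of how the separator interacts with a prescribed ``boundary'' set $S$ that is already in the bag, then we must reprove the separator lemma relative to $S$. The first thing we would try is to apply the domination separator to the weight function concentrated uniformly on $S$ and appeal to their lemma turning such $S$-balanced separators into loose ones. If that fails, we would redo the central-bag argument of \cite{ta2} while tracking that the forcer set stays of bounded size rather than $O(\log n)$.
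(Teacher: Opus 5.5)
There is a genuine gap. Your Steps 1 and 2 are fine: the property you extract is exactly Theorem~\ref{thm:dbs}, and heredity is automatic. The problem is Step 3. There you invoke a version of the Hajebi--Spirkl theorem whose hypotheses you have not pinned down, and the version you actually use is false. That version says hereditary separator domination alone gives $\ta(G)=O(\log n)$. But every induced subgraph of $K_{n,n}$ is $1$-separator-dominated: take a maximum-weight vertex on one side, and its closed neighborhood leaves only isolated vertices of weight at most $\frac12$. Yet $\ta(K_{n,n})=n$, which is not $O(\log(2n))$.

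So bounded-domination balanced separators cannot by themselves give a logarithmic bound. Your account of where the logarithm comes from (recursion depth, with $O(1)$ cost per level) is also not how the argument works. The available result, Lemma~\ref{lem:ta-bounded}, has a second hypothesis: $G$ must be $(b,c)$-loosely $d$-$\alpha$-separable. Its conclusion is $\ta(G)\le 10c+5d$, with no $\log n$ term of its own, so the logarithm has to come in through $d$.

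The missing step is to prove loose $\alpha$-separability with $d=O(\log n)$. This needs two ingredients your proposal never mentions.
\begin{enumerate}[\rm (1)]
\item Since $G$ is theta-free, it is $K_{3,3}$-free. So by Lemma~\ref{lem:hit-vs-anti} with $q=2$, every $b$-system $\mathcal S$ either has a hitting set of independence number less than a constant $c$, or contains two anticomplete members $S_1,S_2$.
\item In the latter case, take each of the at most $b^2$ pairs $s_1\in S_1$, $s_2\in S_2$. Apply Theorem~\ref{thm:sep} inside the 3PC-free induced subgraph $(G\setminus(S_1\cup S_2))\cup\{s_1,s_2\}$; this theorem says all paths between two non-adjacent vertices of a 3PC-free graph can be hit by a set of independence number at most $32\log n$. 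The union of the resulting sets hits every path in $\mathcal{P}_G(S_1,S_2)$, avoids $S_1\cup S_2$, and has independence number at most $32b^2\log n$.
\end{enumerate}
Lemma~\ref{lem:ta-bounded} then gives $\ta(G)\le 10c+160b^2\log n$. Theorem~\ref{thm:sep} is the real source of the $\log n$. Your fallback ideas (reweighting on a boundary set, or redoing the central-bag argument with bounded forcers) target the separator-domination side, which was never the issue.
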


Note that since the class of theta-free graphs is ``$\chi$-bounded'' (see \cite{chi-survey} for details), Theorem \ref{thm:main} yields another proof of Theorem \ref{thm:3pc-tw}.

\section{Proof}

We include some definitions from \cite{ta2} and \cite{2dbs} that will be useful for us.
If $S$ is a set of sets, a \textit{hitting set for $S$} is a set $X$ such that $X\cap Y\neq\varnothing$ for every $Y\in S$.
For $b\in \mathbb N$, a \textit{$b$-system} $\mca S$ in $G$ is a set of non-empty subsets of $V(G)$, each of cardinality at most $b$ (thus, $\mca S=\varnothing$ is allowed, whereas $\mca S=\{\varnothing\}$ is not).
We say that a $b$-system $\mca{S}$ in $G$ is \textit{anticomplete} if every two distinct sets $S_1,S_2\in \mca{S}$ are anticomplete in $G$.
We denote the set of all $(X,Y)$-paths in $G$ by $\mca{P}_G(X,Y)$.

For $b,c,d\in \mathbb N$, a graph $G$ is \textit{$(b,c)$-loosely $d$-$\alpha$-separable} if for every $b$-system $\mca{S}$ in $G$, either
\begin{itemize}
	\item there is a hitting set $X\subseteq V(G)$ for $\mca{S}$ with $\alpha(X)\leq c$; or
	\item there exist $S_1,S_2\in \mca{S}$ such that $\mca{P}_G(S_1,S_2)$ has a hitting set $Y\subseteq V(G)\setminus (S_1\cup S_2)$ with $\alpha(Y)\leq d$; in particular, $S_1$ and $S_2$ are anticomplete in $G$.
\end{itemize}

For a graph $G$, a function $w: V(G) \to [0,1]$ is a {\em normal weight function} on $G$ if $w(V(G))=1$.
Let $c \in [0, 1]$ and let $w$ be a normal weight function on $G$.
A set $X \subseteq V(G)$ is a {\em $(w,c)$-balanced separator} if $w(D) \leq c$ for every component $D$ of $G \setminus X$.
The set $X$ is a {\em $w$-balanced separator} if $X$ is a $(w,\frac{1}{2})$-balanced separator.
For $b\in \mathbb N$, a graph $G$ is \textit{$b$-separator-dominated} if for every normal weight function $w$ on $G$, there exists $X\subseteq V(G)$ with $|X|\le b$ such that $N[X]$ is a $w$-balanced separator in $G$.

Hajebi and Spirkl \cite{2dbs} prove:

\begin{lemma}[Hajebi, Spirkl \cite{2dbs}]
\label{lem:hit-vs-anti}

	For all $a,b,q\in \mathbb N$, there is a constant $c_{\blackref{lem:hit-vs-anti}}=c_{\blackref{lem:hit-vs-anti}}(a,b,q)\in \mathbb N$ such that for every $K_{a,a}$-free graph $G$ and every $b$-system $\mca{S}$ in $G$, either there is a hitting set $X\subseteq V(G)$ for $\mca{S}$ with $\alpha(X)< c_{\blackref{lem:hit-vs-anti}}$, or there is an anticomplete $b$-system $\mca{Q}\subseteq\mca{S}$ in $G$ with $|\mca{Q}|\geq q$.

\end{lemma}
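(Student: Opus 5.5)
The plan is to argue by contradiction from a maximal anticomplete subfamily, and to use $K_{a,a}$-freeness, through a Ramsey-type argument, to control neighbourhoods that would otherwise have unbounded independence number. Fix $a,b,q$ and suppose $\mca S$ has no hitting set $X$ with $\alpha(X)<c$, where $c$ is a large constant chosen at the end. Let $\mca Q\subseteq \mca S$ be a maximal anticomplete subfamily, and assume $|\mca Q|<q$. Put $Z=\bigcup \mca Q$, so $|Z|<qb$. By maximality every $S\in\mca S$ meets $N[Z]$; in particular $N[Z]$ is a hitting set for $\mca S$.

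The difficulty is that $\alpha(N[Z])$ is unbounded. So I would split $\mca S$ according to the vertices of $Z$. Let $\mca S_z$ be the sets meeting $N[z]$, for $z\in Z$. Since $|Z|<qb$, some $z$ has the property that $\mca S_z$ admits no hitting set of independence number below $c/(qb)$. It then suffices to prove the following statement for a single vertex $z$, with a larger target $q'$ and a smaller threshold: either $\mca S_z$ has a hitting set with bounded $\alpha$, or it contains a large anticomplete subfamily.

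For this single-vertex statement I would induct on $b$, with $b=0$ vacuous. Delete the vertices of $N[z]$ from every set: define $\mca S_z'=\{S\setminus N[z]: S\in\mca S_z\}$, discard the sets that become empty, and note the remaining sets have size at most $b-1$. Apply induction to $\mca S_z'$ in the graph $G\setminus N[z]$ to get one of two outcomes.
\begin{itemize}
\item[(i)] A bounded-$\alpha$ hitting set $X'$ of $\mca S_z'$.
\item[(ii)] A huge anticomplete subfamily of $\mca S_z'$.
\end{itemize}
In case (i), $X'$ covers every set except those contained in $N[z]$. Those remaining sets are handled by the hitting set $N(z)$ restricted to them, and we must show this restriction has bounded $\alpha$ or else yields many anticomplete sets.

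The key step, and the main obstacle, is exactly this. Suppose we have a large independent family of vertices $u_1,\dots,u_m\in N(z)$, one from each of many sets of $\mca S_z$. We must extract many pairwise \emph{anticomplete} sets among them, or else find a $K_{a,a}$. I would apply Ramsey's theorem to the bounded-size sets $S_i\ni u_i$, colouring the pair $\{i,j\}$ by whether $S_i$ and $S_j$ touch. A large homogeneous anticomplete class is the desired outcome. A large homogeneous touching class has each $S_i$ of size at most $b$ meeting all the others. In that case a further Ramsey step, applied to which coordinates touch, produces two families $A,B$ of size $a$ that are complete to each other. These are either vertices of the sets, giving $K_{a,a}$, or they collapse to a bounded hitting structure that contradicts the assumption of no small-$\alpha$ hitting set.

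The constants grow as iterated Ramsey numbers in $a,b,q$, and $c_{\ref{lem:hit-vs-anti}}$ is taken to be the final bound.
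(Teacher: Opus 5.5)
The paper does not prove this lemma; it quotes it from Hajebi--Spirkl. I therefore assess your argument on its own terms, and it has a genuine gap.

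Your opening reduction is sound. If a maximal anticomplete $\mathcal{Q}\subseteq\mathcal{S}$ has fewer than $q$ members, then every set meets $N[Z]$ with $|Z|<qb$. It then suffices to handle each $\mathcal{S}_z$ separately, at the cost of a factor $qb$ in the threshold.

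After that, however, the argument makes no progress. The induction on $b$ only shortens sets that leave $N[z]$. The sets contained in $N(z)$ are untouched, and for them the problem is the original lemma in full generality. Indeed, adding a universal vertex $z$ to any $K_{a,a}$-free graph $H$ keeps it $K_{a,a}$-free when $a\ge 2$, because every vertex of an induced $K_{a,a}$ has a non-neighbour. So the whole content of the lemma is pushed into your ``key step''.

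In addition, outcome (ii) of the induction is never treated, and it is not harmless. The truncations $S\setminus N[z]$ may be pairwise anticomplete while all the parts $S\cap N(z)$ share one vertex or lie in one clique. In that case no two of the original sets are anticomplete.

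The key step fails at its final sentence. Two families $A,B$ complete to each other form an induced $K_{a,a}$ only if both are independent. The Ramsey outcomes that do not give $K_{a,a}$ are those in which the selected sets share a vertex or some coordinate forms a clique. Such a vertex or clique has $\alpha\le 1$, but it hits only the sets you selected, not $\mathcal{S}$, so it contradicts nothing.

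Here is a concrete example. Let $z$ be adjacent to everything, and inside $N(z)$ take $r$ pairwise anticomplete copies of a gadget: a vertex $v$ together with pairwise non-adjacent vertices $u_1,\dots,u_m$, and the sets $\{v,u_i\}$. There is no induced $K_{2,2}$, and no hitting set has independence number below $r$. Yet if all your representatives come from one copy, they are independent, the sets pairwise touch, and Ramsey returns only the common vertex $v$. This also refutes your intermediate claim that many independent representatives in $N(z)$ force many anticomplete sets or a $K_{a,a}$.

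What is missing is a mechanism that uses the absence of a small-$\alpha$ hitting set globally, bounding how many such locally clique-hit pieces there can be. One possible starting point is to pick $S_1,S_2,\dots$ one at a time, each avoiding a bounded-$\alpha$ set determined by the earlier ones; the hypothesis allows this. Combine it with the fact that in a $K_{a,a}$-free graph the common neighbourhood of $a$ pairwise non-adjacent vertices has independence number less than $a$. This rules out some bad Ramsey patterns, but not the one where a coordinate forms a clique. Handling that case is the real work, and your proposal does not supply it.
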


\begin{lemma}[Hajebi, Spirkl \cite{2dbs}]
\label{lem:ta-bounded}

	Let $b,c,d\in \mathbb N$ and let $G$ be a $(b,c)$-loosely $d$-$\alpha$-separable graph such that every induced subgraph of $G$ is $b$-separator-dominated.
	Then, $\ta(G)\leq 10c+5d$.

\end{lemma}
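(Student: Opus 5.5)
The statement is Lemma~\ref{lem:ta-bounded}, due to Hajebi and Spirkl. I would prove it by the usual route from balanced separators to tree decompositions. The one new ingredient is that loose $\alpha$-separability lets us replace the separators $N[X]$ (which may have large independence number) by separators of independence number $O(c+d)$.

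\textbf{Step 1: a separator lemma.} Let $H$ be an induced subgraph of $G$ and $w$ a normal weight function on $H$. I would show that $H$ has a $w$-balanced separator $W$ with $\alpha(W)\le 2c+d$; a slightly weaker balance constant is also acceptable.

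Let $\mca S$ be the $b$-system of all non-empty $X\subseteq V(H)$ with $|X|\le b$ such that $N[X]$ is a $w$-balanced separator of $H$. Note that $(b,c)$-loose $d$-$\alpha$-separability is inherited by induced subgraphs. Suppose the second outcome holds, with anticomplete $X_1,X_2\in\mca S$ and a set $Y$ hitting all $(X_1,X_2)$-paths with $\alpha(Y)\le d$. Then no component of $H\setminus Y$ meets both $X_1$ and $X_2$. I would try to show that a suitable small-$\alpha$ set built from $Y$ (possibly together with a hitting set from a second application) balances $w$. The idea is to compare each component $D$ of $H\setminus Y$ with the components of $H\setminus N[X_i]$ for the $X_i$ that $D$ misses.

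Suppose instead the first outcome holds, with a hitting set $Y$ for $\mca S$ and $\alpha(Y)\le c$. I claim $Y$ is itself nearly balanced. If some component $D$ of $H\setminus Y$ had $w(D)>1/2$, apply $b$-separator-domination to $D$ with $w$ renormalised on $D$. This gives $X'\subseteq D$ such that $N_D[X']$ halves $D$. Since $N_H[X']\supseteq N_D[X']$, this should force $X'\in\mca S$, contradicting $X'\cap Y=\varnothing$. Checking that $N_H[X']$ really balances the whole of $H$ is delicate, so I may settle for a weaker constant than $1/2$, such as $2/3$, at this point.

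\textbf{Step 2: the induction.} I would prove the stronger statement that for every induced subgraph $H$ and every $Z\subseteq V(H)$ with $\alpha(Z)\le k$ (with $k$ around $6c+3d$), $H$ has a tree decomposition of independence number at most $10c+5d$ in which some bag contains $Z$. The induction is on $|V(H)|$. Let $I$ be a maximum independent set of $Z$, put uniform weight on $I$, and use Step 1 to get a separator $W$ with $\alpha(W)\le 2c+d$. Each component $D$ of $H\setminus W$ then satisfies $\alpha(Z\cap D)\le k/2$. For each $D$, recurse on $D\cup W$ with $Z_D=(Z\cap D)\cup W$. We have $\alpha(Z_D)\le k/2+2c+d\le k$ provided $k\ge 4c+2d$. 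We also need $D\cup W\subsetneq H$, which needs a separate trivial case when $H\setminus W$ is connected or small. Finally, glue the decompositions to a root bag $Z\cup W$, whose independence number is at most $k+2c+d$. Choosing $k=8c+4d$ gives the bound $10c+5d$, which is presumably the intended arithmetic.

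\textbf{Main obstacle.} The hard part is Step 1: turning the two outcomes of loose separability into a genuinely balanced separator of small $\alpha$. The separators $N[X]$ do not have small $\alpha$, so the balance has to be transferred from them to $Y$. In the hitting-set case, the subtle point is showing that a dominating balanced separator found inside a heavy component $D$ lifts to one for $H$. My first attempt would be to choose $w$-heavy structures carefully, for instance by restricting $\mca S$ to sets $X$ that meet the heavy component. If exact balance fails, I would accept a weaker balance constant and adjust the recursion constants.
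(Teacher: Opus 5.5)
The paper only imports this lemma from Hajebi--Spirkl, so there is no in-paper proof to compare against. Judged on its own, your proposal has a genuine gap exactly where you flag it, and the claim you make there is false. Take $\mathcal S$ to be all $X$ with $1\le|X|\le b$ and $N[X]$ $w$-balanced. A hitting set $Y$ with $\alpha(Y)\le c$ need not be balanced in any sense.

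Take a long cycle $C$ carrying uniform weight, and a clique $K$ with $|K|>b$. Make each vertex of $C$ adjacent to exactly one vertex of $K$, with the neighbours of each vertex of $K$ spread evenly along $C$. Every induced subgraph is $2$-separator-dominated: a vertex of $K$ dominates $K$ and leaves a linear forest. But for $X\subseteq C$ with $|X|\le b$, some vertex of $K$ lies outside $N[X]$ and reconnects all long arcs of $C\setminus N[X]$. So $K$ hits $\mathcal S$ and $\alpha(K)=1$, while $H\setminus K=C$ is connected of weight $1$. This is precisely your lifting step failing: a component of $H\setminus N_H[X']$ can absorb all components of $D\setminus N_D[X']$ through $Y\setminus N(X')$. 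So no weaker balance constant survives either.

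The missing idea is to let each member of the system carry its own small-$\alpha$ helper. Put $X$ in $\mathcal S$ if $1\le|X|\le b$ and $Q\cup N[X]$ is $w$-balanced for some $Q$ with $\alpha(Q)\le c$.

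\emph{Hitting-set outcome.} A hitting set $Z$ now defeats itself. Applying domination to $H\setminus Z$ (renormalised) gives $X\subseteq V(H)\setminus Z$ with $Z\cup N[X]$ balanced. Either $X=\varnothing$ and $Z$ is itself a balanced separator, or $X\in\mathcal S$ misses $Z$, a contradiction.

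\emph{Separation outcome.} Let $Q_1,Q_2$ be the witnesses and $Y$ the separator, and set $W=Y\cup Q_1\cup Q_2$. For a component $D$ of $H\setminus W$, the set $D\cup(N(D)\cap(X_1\cup X_2))$ is connected and avoids $Y$, so it misses some $X_i$. Hence $D$ is disjoint from $Q_i\cup N[X_i]$ and has weight at most $1/2$. The enlargement is what handles vertices of $X_1\cap Q_2$. This gives balanced separators with $\alpha\le 2c+d$, and $10c+5d=5(2c+d)$.

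With your original system, case (b) is immediate for the same reason: $X_i\cap Y=\varnothing$, so a component of $H\setminus Y$ missing $X_i$ also misses $N[X_i]$.

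Step 2 also contains an error. Balancing the uniform weight on a maximum independent set $I$ of $Z$ bounds $|I\cap D|$, not $\alpha(Z\cap D)$. Since the components of $H\setminus W$ are pairwise anticomplete, $\alpha(Z\cap D)+\sum_{D'\ne D}|I\cap D'|\le|I|$. This gives only $\alpha(Z\cap D)\le|I\cap D|+|I\cap W|\le k/2+\delta$, where $\delta=2c+d$. Hence $\alpha(Z_D)\le k/2+2\delta$, so you need $k\ge 4\delta$, not $k\ge 2\delta$. Your choice $k=8c+4d=4\delta$ is then exactly right, and the root bag has $\alpha\le 5\delta$.

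The same count settles termination. If $\alpha(Z)=4\delta$ and $H\setminus W$ had at most one component $D$, then $4\delta=|I|\le|I\cap D|+|I\cap W|\le 3\delta$, which is impossible. If $\alpha(Z)<4\delta$, first add a vertex to $Z$.
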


The following was proved in \cite{ta2}:

\begin{theorem}[Chudnovsky, Hajebi, Lokshtanov, Spirkl \cite{ta2}]
\label{thm:dbs}

	There is an integer $b$ with the following property.
	Let $G$ be a 3PC-free graph, and let $w$ be a normal weight function on $G$.
	Then, there exists $Y \subseteq V(G)$ such that
	\begin{itemize}
		\item $|Y| \leq b$, and
		\item $N[Y]$ is a $w$-balanced separator in $G$.
	\end{itemize}

\end{theorem}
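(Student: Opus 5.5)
We argue by contradiction, with $b$ to be fixed large at the end. Assume that for some 3PC-free $G$ and normal weight function $w$, no set $N[Y]$ with $|Y|\le b$ is a $w$-balanced separator. Then for every $Y$ with $|Y|\le b$ there is a unique component $B(Y)$ of $G\setminus N[Y]$ with $w(B(Y))>\frac12$. Uniqueness holds because two such components would have total weight more than $1$. Two consequences follow.
\begin{enumerate}[(i)]
\item Monotonicity: if $Y\subseteq Y'$, then $B(Y')\subseteq B(Y)$.
\item Intersection: for any $Y,Y'$ with $|Y\cup Y'|\le b$, the components $B(Y)$ and $B(Y')$ intersect, since both carry more than half of the weight.
\end{enumerate}
This is the standard ``central component'' setup. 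The goal is to show that if it persists for $b$ steps, it forces an induced theta, pyramid or prism, which are exactly the three-path-configurations.

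The plan is to build $Y$ greedily. Choose $v_1,v_2,\dots$ so that each $v_{i+1}$ lies in $N(B(\{v_1,\dots,v_i\}))$ and is chosen to shrink the big component as much as possible. Let $B_i=B(\{v_1,\dots,v_i\})$. The key local claim concerns a single vertex $v\notin B$ with neighbours in $B=B_i$. If $N(v)\cap B$ is ``spread out'', meaning it contains two vertices joined by no short path inside $B$ avoiding the rest of $N(v)$, then together with a path through $N(B)$ to another attachment this yields a theta or a pyramid. We would make this precise by taking minimal connected subgraphs of $B$ containing three attachments, applying the usual three-leaf subtree lemma (a subdivided claw or a triangle with three paths), and checking that each outcome closes into a 3PC.

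Consequently, in a 3PC-free graph the attachments of each outside vertex to $B$ are essentially ``local''. We then want a bounded-size set of vertices on $\partial B$ whose closed neighbourhood separates $B$ in a balanced way. To get it, we would take a vertex $u\in B$ maximizing the weight separated, and show that $N[u]$ plus the neighbourhoods of the few boundary vertices attached near $u$ split $B$. Iterating a constant number of times then brings every component below $\frac12$.

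The main obstacle is bounding the number of iterations by an absolute constant $b$, independent of $n$ and of the clique number. Large cliques are allowed, so Ramsey arguments that produce long induced paths or anticomplete families are unavailable. What I would try first is the following. Suppose the greedy process runs for many steps. Apply Lemma~\ref{lem:hit-vs-anti}-style reasoning with $b$-systems $\{v_i\}$: either the $v_i$ are hit by a bounded-$\alpha$ set, which gives a dominating structure, or many of them are pairwise anticomplete with neighbours in the common big component. Three anticomplete vertices $v_i,v_j,v_k$, each attaching to the connected set $B$ and to the previously separated parts, yield a theta by routing through $B$ and through the pieces cut off earlier. Making this routing produce \emph{induced} paths with anticomplete interiors is the delicate step. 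There I expect to need the case analysis on whether the minimal connector in $B$ is a claw-like tree (giving a theta) or contains a triangle (giving a pyramid or prism).
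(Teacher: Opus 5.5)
\textbf{There is a genuine gap.} The paper gives no proof of this statement; it is imported from \cite{ta2}. Your proposal does not supply one either. The step you single out as the main obstacle, bounding the number of rounds by an absolute constant, is the entire content of the theorem, and the ingredients you propose for it do not work.

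Your preliminary observations about $B(Y)$ are fine. But the greedy rule, taken literally, already fails on a long hole with uniform weights. Since $N(B_i)\subseteq N(\{v_1,\dots,v_i\})$, each new vertex extends a contiguous arc, every round removes exactly one vertex from the big component, and about $n/2$ rounds are needed. The appeal to Lemma~\ref{lem:hit-vs-anti} is also empty. For a family of singletons it only says that $\{v_1,\dots,v_k\}$ has small stability number or contains a large stable set. In the first case, a maximal stable subset $I$ with $|I|<c$ merely gives $N[\{v_1,\dots,v_k\}]\subseteq N[N[I]]$, which yields no bounded $Y$ with the required property.

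More fundamentally, both structural claims you want to use are false, because 3PC-free graphs may contain wheels. First, attachments to $B$ need not be local. A hole together with a vertex having at least three pairwise non-adjacent, far-apart neighbours on it is 3PC-free. If $B$ is a long subpath of that hole, the centre lies outside $B$ and attaches to it in a completely non-local way.

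Second, three pairwise non-adjacent vertices with neighbours in two anticomplete connected sets need not produce a 3PC. Take the hole $x_1a_1a_2x_2b_2b_1x_1$ and add $x_3$ adjacent exactly to $a_1,a_2,b_1,b_2$. Then $x_1,x_2,x_3$ are pairwise non-adjacent, and each has neighbours in both $\{a_1,a_2\}$ and $\{b_1,b_2\}$. Yet the graph is 3PC-free:
\begin{itemize}
\item a theta would need ends $a_i,b_j$, and the three paths leaving $a_i$ would have the adjacent vertices $x_3$ and $a_{3-i}$ in different interiors;
\item both triangles contain $x_3$, so there is no prism;
\item a direct check rules out pyramids.
\end{itemize}
Your case analysis lists only the claw and triangle outcomes of the minimal-connector lemma. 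Its third outcome is a path joining attachments of two of the vertices, with the third vertex having several neighbours on it. That outcome is exactly how wheels arise, and it cannot be excluded.

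What is missing is a way to handle wheel centres. Roughly, the known argument (see \cite{ta2} and the work it builds on) does this structurally. It uses separations whose separator lies in the closed neighbourhood of a wheel centre to reduce to a wheel-free part. That part has no theta, pyramid, prism or wheel, so it decomposes by clique cutsets into cliques and holes. There, balanced separators dominated by a bounded number of vertices are easy to find, and they are then lifted back to $G$.
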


\begin{theorem}[Chudnovsky, Hajebi, Lokshtanov, Spirkl \cite{ta2}]
\label{thm:sep}

	Let $G$ be a 3PC-free graph and let $a,b \in V(G)$ be non-adjacent.
	Then, there is a hitting set $X \subseteq V(G) \setminus \{a,b\}$ for $\mca{P}_G(\{a\},\{b\})$ with $\alpha(X) \leq 32 \log n$.
	
\end{theorem}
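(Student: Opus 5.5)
The plan is to argue by induction on $n$. The aim is a recurrence of the form $f(n)\le f(n/2)+C$ for the least $\alpha$-bound $f(n)$ of a hitting set for $\mca{P}_G(\{a\},\{b\})$, which gives $f(n)=O(\log n)$. The constant $32$ would come out of tracking $C$ precisely; I will not try to reproduce it.

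\textbf{Step 1: a local lemma.} Let $v$ be a vertex and let $D$ be a connected set anticomplete to $v$. I would show that $\alpha(N(v)\cap N(D))\le 2$ whenever $G$ is 3PC-free. Suppose instead that $x_1,x_2,x_3\in N(v)\cap N(D)$ are pairwise non-adjacent. Take a minimal connected induced subgraph $H$ of $D$ containing a neighbour of each $x_i$. By minimality, $H$ is either a path or a subdivided claw whose leaves are the attachment points. Routing each $x_i$ back to $v$ then gives a theta in the claw case and a pyramid in the path case, and both are three-path-configurations. The delicate part is the case analysis in which some $x_i$ has several neighbours in $H$; I would handle it by the usual choice of shortest attachment paths.

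\textbf{Step 2: the recursion.} Let $w$ be the uniform weight on $G$. Theorem~\ref{thm:dbs} gives $Y$ with $|Y|\le b$ and $N[Y]$ a $w$-balanced separator, so every component of $G\setminus N[Y]$ has at most $n/2$ vertices. Let $D$ be a component of $G\setminus N[Y]$ relevant to $b$, meaning either $b\in D$ or $b$ has a neighbour in $D$.
\begin{itemize}
\item Since $N(D)\subseteq N(Y)$ and $D$ is anticomplete to $Y$, Step~1 applied to each $y\in Y$ gives $\alpha(N(D))\le 2b$.
\item If $b\in D$ and $a\notin N[D]$, then $N(D)$ itself is a hitting set of independence number at most $2b$.
\item If $a\in D\cup N(D)$, recurse on $G'=G[D\cup\{a,b\}]$, which has at most $n/2+2$ vertices. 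Take the union of the recursive hitting set with $N(D)\setminus\{a,b\}$.
\end{itemize}
This union is a hitting set: any $a$--$b$ path that is not contained in $G'$ must leave $D\cup\{a,b\}$ and re-enter it, and every such exit passes through $N(D)\setminus\{a,b\}$. The resulting recurrence is $f(n)\le f(n/2+2)+2b+O(1)$, which gives the logarithmic bound.

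\textbf{Main obstacle.} The hard case is when both $a$ and $b$ lie in $N[Y]$. Then the relevant components are those $D$ of $G\setminus N[Y]$ that are adjacent to both $a$ and $b$. Three such components would give a theta through $a$ and $b$, since $a,b$ are non-adjacent and distinct components have anticomplete interiors, so there are at most two of them. However, recursing into two components separately leads to $f(n)\le 2f(n/2)+C$, which is linear rather than logarithmic.

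To get around this, my first attempt would be to choose the weight function adaptively, for example putting the weight on one of these components only, so that the recursion always continues into a single piece. A second option is to hit all $a$--$b$ paths through one of the two components directly, by $\{y\}\cup(N(y)\cap N(D))$-type sets, using Step~1 again. This is the step I expect to require the real work in the argument.
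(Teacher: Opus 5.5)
The paper gives no proof of this statement. It is quoted from \cite{ta2} and used as a black box, so I am judging your argument on its own terms. It has two genuine gaps.

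\textbf{Step 1 is false, and no constant repairs it.} For $m\ge 1$ let $G_m$ consist of:
a vertex $v$;
a stable set $Z=\{z_0,\dots,z_{m+1}\}$ complete to $v$;
and a path $P=q_0r_1q_1r_2q_2\cdots r_mq_m$ anticomplete to $v$.
The only edges between $Z$ and $P$ are $z_0q_0$, $z_{m+1}q_m$, and $z_jq_{j-1},z_jq_j$ for $1\le j\le m$.

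$G_m$ is bipartite, so it contains no pyramid or prism. It also contains no theta:
\begin{enumerate}[(i)]
\item $G_m\setminus v$ is a cactus whose only holes are the $4$-cycles $z_jq_{j-1}r_jq_j$, so any theta must contain $v$.
\item If $v$ is interior to one path of the theta, the other two paths form one of these $4$-cycles. Since $r_j$ has degree $2$, the ends are $q_{j-1},q_j$. Then $z_j$ is an interior vertex adjacent to $v$, a contradiction.
\item If $v$ is an end, the other end is some $q_j$. The only paths through $z_j$ or $z_{j+1}$ are $q_jz_jv$ and $q_jz_{j+1}v$. A path leaving $q_j$ along $P$ has $q_{j-1}$ or $q_{j+1}$ in its interior, which is adjacent to $z_j$ or $z_{j+1}$ respectively. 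So no three such paths have pairwise anticomplete interiors.
\end{enumerate}

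Hence $G_m$ is 3PC-free. Moreover $D=V(P)$ is a component of $G_m\setminus N[v]$, and $N(v)\cap N(D)=Z$ is stable of size $m+2\approx n/3$. For $m=1$ this is a $7$-vertex graph in which $z_1$ is the centre of a wheel with rim $vz_0q_0r_1q_1z_2v$. That is exactly the case you deferred, where some $x_i$ has several neighbours in $H$, and wheels are not three-path-configurations.

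Now glue two copies of $G_m$ at $v$. Every three-path-configuration is $2$-connected, so one would have to lie inside a single copy; hence the glued graph is still 3PC-free. In it, $N[v]$ is a balanced separator for the uniform weight, so $Y=\{v\}$ satisfies the conclusion of Theorem~\ref{thm:dbs}. Your argument uses only that conclusion, and the set $N(D)$ you add in Step~2 then has independence number linear in $n$. So the bound $\alpha(N(D))\le 2b$ fails, and the recursion as designed collapses.

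\textbf{The hard case is left open, and it is also mis-described.} Even granting some bound on $\alpha(N(D))$, you do not handle $a,b\in N[Y]$. This case does not reduce to the components of $G\setminus N[Y]$ adjacent to both $a$ and $b$. An $a$--$b$ path may run entirely inside $G[N[Y]]$. It may also pass through several components, each adjacent to only one of $a,b$, reconnecting through vertices of $N(Y)$. Since $\alpha(N[Y])$ is unbounded, these paths can neither be absorbed into the hitting set nor reached by recursing into one component of $G\setminus N[Y]$. Your two suggested remedies are not worked out, and the second relies on Step~1.

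Finally, any proof routed through Theorem~\ref{thm:dbs} yields a bound of order $b\log n$ for the unspecified $b$ of that theorem. It therefore cannot give the explicit $32\log n$ in the statement, even though $O(\log n)$ is all that Theorem~\ref{thm:main} needs.
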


\begin{proof}[Proof of Theorem \ref{thm:main}:]

	\

	By Theorem \ref{thm:dbs}, there exists an integer $b$ such that every 3PC-free graph is $b$-separator-{\allowbreak}dominated.
	In particular, every induced subgraph of $G$ is $b$-separator-{\allowbreak}dominated.
	Let $\mca S$ be a $b$-system in $G$.
	Since $G$ is theta-free, $G$ is $K_{3,3}$-free.
	By Lemma \ref{lem:hit-vs-anti}, there exists an integer $c$ such that either there is a hitting set $X\subseteq V(G)$ for $\mca S$ with $\alpha(G[X]) < c$, or there are two anticomplete sets $S_1,S_2\in \mca S$.
	Let $s_1\in S_1, s_2\in S_2$, and let $G_{s_1s_2}=(G\setminus (S_1\cup S_2))\cup \{s_1,s_2\}$.
	Theorem \ref{thm:sep} implies that there exists a hitting set $Y_{s_1s_2}$ for $\mca{P}_{G_{s_1s_2}}(\{s_1\},\{s_2\})$ in $G_{s_1s_2}$ with $\alpha(Y_{s_1s_2})\le 32\log n$.
	Thus, $\mca{P}_G(S_1,S_2)$ has a hitting set $Y=\cup_{s_1\in S_1,s_2\in S_2}Y_{s_1s_2}\subseteq V(G)\setminus (S_1\cup S_2)$ with $\alpha(G[Y])\leq 32b^2 \log n$; consequently, $G$ is $(b,c)$-loosely $(32b^2\log n)$-$\alpha$-separable.
	By Lemma \ref{lem:ta-bounded}, $\ta(G)\le 10c + 160b^2 \log n$.
	This completes the proof.
	
\end{proof}

\section{AI Disclosure}

We used GPT-6 Astra to proofread the manuscript.
No AI tools were used to create the proof.

\bibliographystyle{plain}
\bibliography{ref}

\end{document}